\newcommand{\eps}{\varepsilon}
\newcommand{\e}{\varepsilon}
\renewcommand{\phi}{\varphi}
\newcommand{\R}{\mathbb R}
\newcommand{\N}{\mathbb N}
\newcommand{\E}{\mathbb E}
\renewcommand{\P}{\mathbb P}
\newtheorem{rem}{Remark}
\newtheorem{theorem}{Theorem}
\newtheorem{lemma}[theorem]{Lemma}
\newcommand{\heap}[2]  {\genfrac{}{}{0pt}{}{#1}{#2}}
\newcommand{\sfrac}[2] {\mbox{$\frac{#1}{#2}$}}
\newcounter{remnr}
{\nopagebreak {\hfill{$\diamond$}}\\ }
\renewcommand{\phi}{\varphi}
\renewcommand{\P}{\mathbb{P}}
\renewcommand{\E}{\mathbb{E}}
\begin{document}


\title[A conditioning principle for Galton--Watson trees]
{\Large A conditioning principle for\\ Galton--Watson trees}

\author[Nathana\"el Berestycki, Peter M\"orters and Nadia Sidorova]{Nathana\"el Berestycki, Peter M\"orters and Nadia Sidorova}

\maketitle


\vspace{0.2cm}



\begin{quote}{\small {\bf Abstract: }
We show that an infinite Galton-Watson tree, conditioned on its martingale limit being smaller than $\eps$,
converges as $\eps\downarrow 0$ in law to the regular $\mu$-ary tree, where $\mu$ is the essential minimum of the
offspring distribution. This gives an example of entropic repulsion where the limit has no entropy.}
\end{quote}
\vspace{0.5cm}

\section{Introduction and statement of the result}

The problem of \emph{conditioning principles} can be formulated in the following way: Given that some
quantity averaged over a large number of individual random variables shows highly unlikely behaviour,
describe the conditional law of an individual sample. This situation arises frequently in statistical mechanics,
where the random variables describe individual features of particles (for example their velocity) and the ensemble
of particles is subject to some constraint (for example a fixed energy per particle). The distribution of the individual
feature given the constraint is then referred to as the \emph{micro-canonical} distribution of the system.
The most famous result in this respect is the \emph{Gibbs conditioning principle}, which loosely speaking says that
under the condition that the empirical measure
$$L_n=\frac1n\sum_{i=1}^n \delta_{X_i}$$
of a family of independent random variables $X_1, ,\ldots, X_n$ with law $P$ belongs to some set~$A$,
the law of $X_1$ converges to the probability measure~$Q$ that minimizes the relative entropy $H( Q \,| P)$
subject to the constraint $Q\in A$. 
There exist several refinements of this result
describing rigorously the precise asymptotic strategy by which the random variables realize the large
deviation event $\{L_n\in A\}$. See the book of Dembo and Zeitouni~\cite{dzbook} for more on the classical Gibbs conditioning principle,
\cite{C, DZ, SZ} for refinements, and \cite{DSZ, MV, MN} for further examples of conditioning principles.
\medskip

The conditioning principle of the present paper deals with Galton--Watson trees with a nondegenerate offspring
variable~$N$ satisfying $P(N=0)=0$ and $EN \log N<\infty$. Let $a:=EN$ be the mean offspring number.
We denote by $(Z_n \colon n=0,1,\ldots)$ the sequence of generation sizes of the Galton Watson tree and note
that by definition $Z_0=1$. By the Kesten-Stigum theorem the \emph{martingale limit} 
$$W:=\lim_{n\to\infty} \frac{Z_n}{a^n}$$
is well-defined and strictly positive almost surely. Note that $W$ can be seen as a random constant factor in front of
a deterministic exponential  growth term~$a^n$, which together determine the asymptotics of the generation size~$Z_n$.
We are interested  in the limit behaviour of $Z_1$,  or more generally of the entire tree, when we condition on the
large deviation event that the martingale limit $W$ is smaller than some $\eps\downarrow 0$.
\medskip


For the formulation of the result we denote by $\mathcal T$ the space of all rooted trees with the property that every vertex
has finite degree. A metric~$d$ on this space is uniquely determined by the requirement that $d(T_1,T_2)=e^{-n}$,
when $n$ is maximal with the property that the trees $T_1$ and $T_2$ coincide up to the $n$th generation. This makes $(\mathcal T,d)$ a
complete, separable metric~space.
\medskip

\begin{theorem} \label{T}
Suppose $N$ is a random variable on the positive integers satisfying the condition
$EN \log N<\infty$, and denote
$$\mu:=\min \big\{ n\in\N \colon \P(N=n)>0 \big\} \geq 1.$$
Suppose that $T$ is a Galton--Watson tree with offspring variable $N$ and that
$W$ is the associated martingale limit. Then, as $\eps\downarrow 0$, conditionally on the
event $\{W<\eps\}$ the tree $T$ converges in law on $(\mathcal T, d)$ to the regular $\mu$-ary tree. Equivalently,
for all $k\in\N$,
$$\lim_{\eps\downarrow 0} \P\big( Z_k=\mu^k \, \big| \, W<\eps\big) = 1,$$
where $Z_k$ denotes the size of the $k$th generation.
\end{theorem}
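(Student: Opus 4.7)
My plan is to reduce convergence on $(\mathcal T, d)$ to the equivalent quantitative statement $\P(Z_k = \mu^k \mid W < \eps) \to 1$ for every $k \in \N$, then establish this through a short comparison argument based on the branching decomposition. For the reduction, I would observe that since $\mu$ is the minimum of the support of $N$, we have $Z_k \geq \mu^k$ almost surely with equality if and only if every vertex in the first $k$ generations has exactly $\mu$ offspring; that is, $\{Z_k = \mu^k\}$ is precisely the event that $T$ agrees with the regular $\mu$-ary tree up to generation $k$. Because the metric $d$ records agreement generation by generation, showing the latter event has conditional probability tending to $1$ for every $k$ is equivalent to the stated weak convergence in $(\mathcal T, d)$.

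The proof of the quantitative statement rests on a single identity and a single inequality. The identity is the branching decomposition: conditionally on $\{Z_k = m\}$, the martingale limit is $W = a^{-k} \sum_{i=1}^m W^{(i)}$ for i.i.d.\ copies $W^{(i)}$ of $W$, so
$$\P(W < \eps,\, Z_k = m) = \P(Z_k = m)\, \P\Big(\sum_{i=1}^m W^{(i)} < a^k \eps\Big).$$
The inequality is a positivity-based domination: for $m > \mu^k$, since each $W^{(i)} \geq 0$, the event $\{\sum_{i=1}^m W^{(i)} < a^k \eps\}$ is contained in $\{\sum_{i=1}^{\mu^k} W^{(i)} < a^k \eps\} \cap \bigcap_{i=\mu^k+1}^m \{W^{(i)} < a^k \eps\}$, and independence of disjoint blocks of $W^{(i)}$'s gives
$$\P\Big(\sum_{i=1}^m W^{(i)} < a^k \eps\Big) \leq \P\Big(\sum_{i=1}^{\mu^k} W^{(i)} < a^k \eps\Big)\cdot \P(W < a^k \eps)^{m-\mu^k}.$$
Writing $s(\eps) := \P(W < a^k \eps)$ and using the trivial bound $s(\eps)^{m-\mu^k} \leq s(\eps)$ whenever $m > \mu^k$, summation gives
$$\P(Z_k > \mu^k,\, W < \eps) \;\leq\; s(\eps)\cdot \P\Big(\sum_{i=1}^{\mu^k} W^{(i)} < a^k \eps\Big).$$
Against the matching lower bound $\P(W < \eps) \geq q_k \cdot \P(\sum_{i=1}^{\mu^k} W^{(i)} < a^k \eps)$ with $q_k := \P(Z_k = \mu^k) > 0$ (obtained by restricting to $\{Z_k = \mu^k\}$), division yields
$$\P(Z_k > \mu^k \mid W < \eps) \;\leq\; \frac{\P(W < a^k \eps)}{q_k} \;\longrightarrow\; 0 \quad\text{as } \eps \downarrow 0,$$
the limit holding because $W > 0$ almost surely by the Kesten--Stigum theorem (this is where the hypothesis $\E N \log N < \infty$ enters).

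The step I would spend the most care verifying is the domination inequality: it encodes the intuition that any over-sized generation $k$ must be paid for by $m - \mu^k$ \emph{surplus} subtrees, each of whose martingale limits must independently fall below $a^k \eps$, and the product penalty $s(\eps)^{m - \mu^k}$ easily defeats any combinatorial gain of $\P(Z_k = m)$ over $q_k$. Pleasingly, no delicate small-ball analysis of the left tail of $W$ is required; the only analytic input is the qualitative fact that $\P(W < \delta) \to 0$ as $\delta \downarrow 0$.
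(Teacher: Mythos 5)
Your argument is correct, and it is a genuinely different and substantially more elementary route than the one taken in the paper. You verify the key inequality correctly: by nonnegativity, $\{\sum_{i=1}^m W^{(i)}<a^k\eps\}$ is contained in the intersection of $\{\sum_{i=1}^{\mu^k}W^{(i)}<a^k\eps\}$ with the events $\{W^{(i)}<a^k\eps\}$ for $i>\mu^k$, and these involve disjoint blocks of independent variables; combined with the lower bound $\P(W<\eps)\ge \P(Z_k=\mu^k)\,\P(\sum_{i=1}^{\mu^k}W^{(i)}<a^k\eps)$, the common factor $\P(\sum_{i=1}^{\mu^k}W^{(i)}<a^k\eps)$ cancels and you are left with $\P(Z_k>\mu^k\mid W<\eps)\le \P(W<a^k\eps)/\P(Z_k=\mu^k)\to 0$, using only that $W>0$ almost surely (Kesten--Stigum, where $\E N\log N<\infty$ enters) and that the conditioning event has positive probability, which the statement presupposes (and which holds since the essential infimum of $W$ is $0$). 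The paper instead splits the cases $\mu=1$ and $\mu>1$ and, in the latter (B\"ottcher) case, relies on the sharp left-tail asymptotics $-\log\P(W<x)\sim M(x)x^{-\beta/(1-\beta)}$ of Biggins--Bingham, extends them to sums of i.i.d.\ copies of $W$ via a G\"artner--Ellis-type argument (Lemma~\ref{l_tailm}), and then needs a nontrivial rigidity statement for the real-analytic multiplicatively periodic prefactor $M$ (Lemmas~\ref{a1} and~\ref{mmm}), proved via a density argument on the possible values $m/\mu^n$ of $Z_n/\mu^n$. Your cancellation trick bypasses all of this machinery, treats $\mu=1$ and $\mu>1$ uniformly, and still yields a quantitative bound of essentially the same order (polynomial in $\eps$ in the Schr\"oder case, stretched-exponential in the B\"ottcher case, once the known tails are plugged in). What the paper's heavier route buys is not the theorem itself but the by-products: the precise exponential scale of $\P(Z_n>\mu^n\mid W<\eps)$ expressed through $M$, and the near-constancy consequence $x\mapsto M(x)x^{-1/(1-\beta)}$ decreasing noted in the concluding Remark, which are of independent interest.
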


From the point of view of large deviations theory this result is quite surprising, at least at a first glance.
One would expect that the limiting behaviour represents the optimal strategy by which the event $W=0$ is realized  and
that this strategy depends on the details of the law of~$N$. Moreover, there seems to be no good
reason why in the limit the \emph{growth rate} of the tree should drop dramatically, or in fact why it should drop at all,
as we only require the constant to be small.  Above all, the probability of seeing a $\mu$-ary tree up to the $n$th generation
may be arbitrarily small and can certainly be much smaller than those of other trees with $Z_n\leq \eps a^n$.

\medskip This becomes even more intriguing if the result is put in the context of \emph{entropic repulsion}. This is an expression used by physicists to convey 
the idea that entropy maximisation may force certain systems to obey properties that are not obviously imposed on them \emph{a priori}. This phenomenon has been 
studied mathematically in the context of the two-dimensional harmonic crystal with hard wall repulsion by Bolthausen et al \cite{bdg}, where the following result was proved. 
Consider the discrete Gaussian free field $(\phi_x)_{x \in D_n}$ on a planar domain with mesh size $1/n$. If the field is conditioned to be nonnegative everywhere, 
then the typical value of the field $\phi_x$ at any point $x$ in the interior of the domain will be highly concentrated near the value $(4/\pi) \log n$ with 
overwhelming probability as $n \to \infty$, and in particular under this conditioning  the value of $\phi_x$ diverges to infinity. An analogous phenomenon is 
studied by Benjamini and Berestycki \cite{BB} and \cite{bb2}, where it is shown that conditioning a one-dimensional Brownian motion on some self-repelling behaviour may force 
the process to satisfy a strongly amplified version of the constraint. Usually, the reason entropic repulsion may arise is in order to increase the entropy of the 
system, i.e., make room for fluctuations. Thus the eventual state of the system is a compromise between the energy cost of adopting an unusual behaviour and the 
entropic benefits. Theorem \ref{T} may also be cast in this framework, as it shows that the effect of requiring the constant $W$ to be small is to change the 
overall exponential growth rate from $a$ to $\mu$. However, if the limiting state of the system is the regular $\mu$-ary tree, which is non-random, what could the 
entropic benefits possibly be?
\medskip

\pagebreak[3]

The resolution of this apparent paradox comes from understanding the inhomogeneity of the optimal strategy,
and can be explained by a closer look at the formula
$$Z_n \sim W \, a^n.$$
While the growth rate $\log a$ is purely asymptotic, i.e.\ depends only on the offspring numbers \emph{after} any given generation,
the growth constant $W$ depends heavily on the \emph{initial} generations of the tree. It turns out that, roughly speaking,
the collection of trees which form the optimal strategy to achieve $W<\eps$ have minimal offspring for a few generations,
the exact number depending on $\eps$,
and causes high entropic and energetic cost but only for a small number of generations,
and then switch to  growth with the natural rate~$\log a$. The initial behaviour ensures that $W$ is small
at a mimimal probabilistic cost, because for all but a finite number of generations the trees
can have their natural growth. The topology on $\mathcal T$ compares trees starting from their root
so that in the limit we only see the behaviour in the initial generations. This leads to a limiting
object with minimal growth rate at all generations and creates the illusion of a drop in the growth
rate for the optimal strategy. A somewhat similar phenomenon is observed by Bansaye and Berestycki \cite{BanBer} in the context of branching processes in random environment, although they consider situations where the growth rate is directly conditioned to be atypical.%
\smallskip

Our interest in this `paradox' does not come from the study of trees alone.
Indeed, M\"orters and Ortgiese~\cite{ortgiese} describe a range of problems, mostly related to local times of Brownian motion,
which have a similar intrinsic structure and could therefore also satisfy loosely analogous conditioning principles.
However, we shall defer the discussion of such problems to a different place as, unlike in these problems, the main mathematical difficulty
here is related to the discrete nature of the distribution~$N$. 

\section{Proof of Theorem~1}

Denote $p_k:=P(N=k)$ and recall that by our definitions
$p_0=\cdots=p_{\mu-1}=0, \, p_\mu>0.$
The basic idea of the proof is is to combine tail asymptotics at zero for the random variable~$W$ with
the self-similarity property of Galton--Watson trees, which states that, for every $n\in\N$,
\begin{equation}\label{SSP}
W=\frac1{a^n} \sum_{i=1}^{Z_n} W_{i},
\end{equation}
where $W_{i}$, $i=1,2,\ldots$ are independent variables with the same law as $W$, independent of~$Z_n$.
This follows easily from the decomposition of the tree according to the ancestry in the $n$th generation.
\smallskip%

We first give the proof in the case $\mu=1$, which is very simple as in this case the $\mu$-ary tree
is degenerated and has no exponential growth. In this case the tail at zero of the random variable~$W$
is fat, more precisely there exist constants $0<c<C$ such that
$$c\, \eps^\tau \leq \P\big( W< \eps \big) \leq C \, \eps^\tau \quad \mbox{ for all } 0<\eps<1,$$
where $\tau:=-\log p_1/\log a$, see~\cite[Theorem 1(a)]{ortgiese} for a simple proof. Using~\eqref{SSP} we infer
\begin{align*}
\P\big( Z_n>1, W<\eps\big)
& \leq P\big( W_{1} + W_{2}< a^n \eps\big) \leq P\big( W< a^n \eps\big)^2\\
& \leq C^2\, (a^n \eps)^{2\tau} \leq \P\big( W<\eps\big) \, \big( \sfrac{C^2}{c} a^{2n\tau}\big)  \eps^\tau,
\end{align*}
and hence
$$\P\big( Z_n>1 \mid  W<\eps\big) \leq \big( \sfrac{C^2}{c} a^{2n\tau}\big) \, \eps^\tau
\stackrel{\eps\downarrow 0}{\longrightarrow} 0,\\[1mm]$$
as required to complete the proof in the case~$\mu=1$.
\pagebreak[3]

Now we consider the case $\mu>1$ and assume that $p_{\mu}\neq 1$ to avoid trivialities.
We define the \emph{B\"ottcher constant} $\beta\in (0,1)$ by $$a^{\beta}=\mu.$$
A function $V\colon (0,\infty)\to (0,\infty)$ is called \emph{multiplicatively periodic} with
period $\lambda\neq 1$ if $V(\lambda x)=V(x)$ for all $x>0$.
Biggins and Bingham~\cite[Theorem 3]{biggins} show that there exists a real-analytic
multiplicatively periodic function $M\colon(0,\infty)\to (0,\infty)$ with period $a^{1-\beta}=a/\mu>1$ such
that
\begin{align}
\label{tail2}
-\log \P\big(W<x\big)=M(x) \,x^{-\frac{\beta}{1-\beta}}+o\big(x^{-\frac{\beta}{1-\beta}}\big)
\qquad \text{as }x\downarrow 0,
\end{align}
see also Fleischmann and Wachtel~\cite{FW09} for refinements of this statement.
A key argument in the proof of~\eqref{tail2} is to relate the left tail of a positive random
variable to the behaviour of its Laplace transform at infinity in a way reminiscent
of the Tauberian theorem of de Bruijn, see~\cite[Theorem~4.12.9]{regvar}. In the next lemma
we generalise this result to the case of several independent copies of $W$ using the same
basic method of proof as in~\cite{biggins}.

\pagebreak[3]

\begin{lemma}\label{l_tailm}
Let $X$ be a positive random variable such that, for some $a>1$ and $\beta\in(0,1)$, for all $s>0$,
\begin{align*}
\lim_{n\to\infty}\frac{\log \E\exp\{-sa^n X\}}{a^{n\beta}}=:k(s)
\end{align*}
with some real-analytic function $k$ on $(0,\infty)$. Then there exists
a real-analytic multiplicatively periodic function $V\colon(0,\infty)\to (0,\infty)$
with period $a^{1-\beta}$ such that, for any $m\in \N$ and $X_1, \dots, X_m$ independent
with the same distribution as $X$, we have
\begin{align*}
-\log \P\big(X_1+\cdots+X_m<x\big)=m\,V(x/m)(x/m)^{-\frac{\beta}{1-\beta}}+o\big(x^{-\frac{\beta}{1-\beta}}\big)
\qquad \text{as }x\downarrow 0.
\end{align*}
\end{lemma}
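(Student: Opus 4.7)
Since $X_1,\dots,X_m$ are independent, the Laplace transform of $S_m:=X_1+\cdots+X_m$ satisfies
$\log\E\exp\{-sa^nS_m\}=m\log\E\exp\{-sa^nX\}=\bigl(mk(s)+o(1)\bigr)a^{n\beta}$,
so the hypothesis of the lemma holds for $S_m$ with the analytic function $k$ replaced by $mk$. My plan is to run a Tauberian argument of de Bruijn type along the lines of~\cite{biggins} and read off the $m$-dependence of the resulting periodic function. A preliminary step is to convert the subsequential asymptotic along $\{sa^n\}_n$ into a continuous one: the scaling identity $k(as)=a^\beta k(s)$ (obtained by renumbering in the hypothesis) ensures that $\psi(s):=-k(s)s^{-\beta}$ is positive, real-analytic, and multiplicatively periodic with period $a$; combined with the convexity and monotonicity of $L(s):=\log\E\exp\{-sX\}$ this upgrades the convergence to $L(s)=-\psi(s)s^{\beta}+o(s^{\beta})$ as $s\to\infty$, and hence $\log\E\exp\{-sS_m\}=-m\psi(s)s^{\beta}+o(s^{\beta})$.

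For the upper bound I invoke Markov's inequality, $\log\P(S_m<x)\leq sx-m\psi(s)s^{\beta}+o(s^{\beta})$ for every $s>0$, and minimise the leading part in $s$, ignoring the lower-order contribution of $\psi'$. The critical value $s(x)$ is determined by $s^{1-\beta}=m\beta\psi(s)/x$, and a direct substitution then yields
$-\log\P(S_m<x)\geq(1-\beta)\beta^{\beta/(1-\beta)}\bigl(m\psi(s(x))\bigr)^{1/(1-\beta)}x^{-\beta/(1-\beta)}+o(x^{-\beta/(1-\beta)})$.
I define $V(y):=(1-\beta)\beta^{\beta/(1-\beta)}\psi(\bar s(y))^{1/(1-\beta)}$, where $\bar s(y)$ is the relevant large solution of $\bar s^{1-\beta}=\beta\psi(\bar s)/y$. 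The rescaling $y\leftrightarrow x/m$ shows $s(x)=\bar s(x/m)$, and using $m\cdot m^{\beta/(1-\beta)}=m^{1/(1-\beta)}$ the right-hand side above rewrites precisely as $mV(x/m)(x/m)^{-\beta/(1-\beta)}$. The claimed period $a^{1-\beta}$ of $V$ follows because the substitution $y\mapsto a^{1-\beta}y$ combined with $\bar s\mapsto\bar s/a$ preserves the defining equation for $\bar s$, thanks to the period-$a$ invariance of $\psi$; real-analyticity and positivity of $V$ are inherited from $\psi$ via the implicit function theorem.

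The matching lower bound is the main obstacle and requires a saddle-point / exponential-tilt argument. Let $\mu_s$ denote the tilted measure with density $e^{-sy}/\E e^{-sS_m}$ against the law of $S_m$; its mean is $-mL'(s)\approx m\beta\psi(s)s^{\beta-1}$ and its variance is $-mL''(s)\approx m\beta(1-\beta)\psi(s)s^{\beta-2}$, so taking $s=s(x)$ makes the tilted mean coincide with $x$ to leading order. With $\delta_x$ of the order of the tilted standard deviation, a concentration estimate $\mu_s\bigl((x-\delta_x,x)\bigr)\geq c>0$ (the technical heart of the proof, requiring a local-limit-type control for the tilted law in the spirit of~\cite{biggins}) combines with the exact identity
$\P(S_m<x)\geq \mu_s\bigl((x-\delta_x,x)\bigr)\cdot e^{s(x-\delta_x)}\E e^{-sS_m}$
to give a lower bound. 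Since $s\delta_x=O(s^{\beta/2})=o(x^{-\beta/(1-\beta)})$ by a short calculation using $s\asymp x^{-1/(1-\beta)}$, this matches the upper bound up to the stated error, which completes the proof.
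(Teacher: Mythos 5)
Your skeleton is the right one and matches the paper's: the Laplace exponent of $S_m=X_1+\cdots+X_m$ is $mk(s)$, the $m$-dependence of the tail constant comes out of a Legendre-type duality as $m\,V(x/m)(x/m)^{-\beta/(1-\beta)}$, and the period $a^{1-\beta}$ of $V$ comes from the period $a$ of the oscillating factor in $k$. The paper, however, obtains the two matching bounds in one step by citing a ready-made variant of the G\"artner--Ellis theorem from Biggins--Bingham, applied to $Y_n=-a^nS_m$ with speed $a^{n\beta}$; the only remaining work there is to identify the interval $(-m\delta,0)$ on which the limit is valid, and to upgrade the resulting statement, which a priori holds only along the geometric subsequence $x\in a^{-n(1-\beta)}I$, to a genuine asymptotic as $x\downarrow 0$, via periodicity of $V$ together with Dini-type uniform convergence of monotone functions (the Buchanan--Hildebrandt reference). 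You replace this citation by a hands-on saddle-point argument, and that is where the proposal breaks down.

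Two concrete problems. First, the saddle point is misidentified: writing $\psi(s)=\tilde\psi(\log s)$ with $\tilde\psi$ periodic and real-analytic, one has $\frac{d}{ds}\bigl[\psi(s)s^{\beta}\bigr]=\tilde\psi'(\log s)\,s^{\beta-1}+\beta\tilde\psi(\log s)\,s^{\beta-1}$, so the ``lower-order contribution of $\psi'$'' you discard is of exactly the same order $s^{\beta-1}$ as the term you keep; it is negligible only when $\psi$ is constant, i.e.\ in the genuinely regularly varying case, which is precisely not the situation here. Consequently your explicit $V$ is not the exponent that the Chernoff bound actually produces, and the tilted mean at your $s(x)$ differs from $x$ by an amount of order $x$ itself, vastly exceeding the tilted standard deviation $\asymp s^{\beta/2-1}$, so the tilted mass near $x$ need not be bounded below at all. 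Second, even at the correct tilt, the anti-concentration estimate $\mu_s\bigl((x-\delta_x,x)\bigr)\ge c>0$ over a window of one standard deviation is not a Chebyshev consequence: $m$ is fixed, $s\to\infty$, and such local control of the tilted law is essentially the content of the much harder Fleischmann--Wachtel analysis --- it is comparable in difficulty to the lemma itself, and you leave it unproved. (Relatedly, your formulas for $-mL'(s)$ and $-mL''(s)$ require differentiating the asymptotic $L(s)=-\psi(s)s^{\beta}+o(s^{\beta})$, which is not justified as stated.) The cheap de Bruijn-style substitute $\P(S_m<x)\ge\E e^{-sS_m}-e^{-sx}$ avoids local estimates but, because $\psi$ oscillates, yields a constant that does not match the Chernoff upper bound; closing exactly this gap is why the paper reaches for the packaged large-deviations theorem rather than the tilting argument.
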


\begin{proof}
Let $Y_n$ 
be real-valued random variables and denote, for $s>0$,
$$k_n(s):=\log \E \exp\{sY_n\}\in (-\infty,\infty].$$
Assume that for some sequence of positive numbers $b_n\uparrow\infty$, we have
\begin{align*}
\lim_{n\to\infty}\frac{k_n(s)}{b_n}=:\hat k(s)\in (-\infty,\infty].
\end{align*}
The Fenchel dual of $\hat k$ is given by
$\hat k^*(x):=\sup_{s>0}\{xs-\hat k(s)\}\in (-\infty,\infty]$.
By a variant of the G\"artner--Ellis theorem, see~\cite[Corollary 1]{biggins}, we have
\begin{align*}
\lim_{n\to\infty}\frac{-\log \P(Y_n\ge b_ny)}{b_n}=\hat k^*(y)
\quad\mbox{for all $y\in (\lim\limits_{s\downarrow 0}\hat k'(s),\lim\limits_{s\uparrow\infty}\hat k'(s))$.}
\end{align*}
We apply this first to the sequence $Y_n:=-a^nX$ and observe that $\hat k=k$ in this case.
Note that $k$ satisfies, by definition,
$k(s)= v(s)s^\beta$ for a multiplicatively periodic function~$v$ with period~$a$. Using
that $k$ is strictly convex and decreasing we get $\lim_{s\downarrow 0}k'(s)=:-\delta<0$, and
$$0\geq \lim_{s\uparrow\infty}k'(s) = \lim_{n\to\infty}k'(a^{n+1}) \geq \lim_{n\to\infty} \frac{k(a^{n+1})-k(a^n)}{a^{n+1}-a^n}
= \lim_{n\to\infty} a^{n(\beta-1)} \,v(1)\,\frac{a^{\beta}-1}{a-1}=0.$$
Therefore,
\begin{align*}
\lim_{n\to\infty}\frac{-\log \P(-a^nX\ge a^{n\beta}y)}{a^{n\beta}}=k^*(y) \quad \mbox{for all $y\in (-\delta,0)$, }
\end{align*}
Setting $x=-y$ and rearranging,
\begin{align*}
\lim_{n\to\infty}\frac{-\log \P(X\le a^{-n(1-\beta)}x)}{a^{n\beta}x^{-\beta/(1-\beta)}}=
k^*(-x)x^{\frac{\beta}{1-\beta}}=:V(x), \quad\mbox{for $x\in (0,\delta)$,}
\end{align*}
where 
$V$ is real-analytic and multiplicatively periodic with period~$a^{1-\beta}$.
\smallskip

Now, consider $\tilde Y_n=-a^n(X_1+\cdots+X_m)$. We have
\begin{align*}
\lim_{n\to\infty}\frac{\log \E\exp\{-sa^n(X_1+\cdots+X_m)\}}{a^{n\beta}}
=m\lim_{n\to\infty}\frac{\log \E\exp\{-sa^n X\}}{a^{n\beta}}=mk(s),
\end{align*}
and hence
\begin{align*}
\lim_{n\to\infty}\frac{-\log \P(-a^n(X_1+\cdots+X_m)\ge a^{n\beta}y)}{a^{n\beta}}=
\sup_{s>0}\big\{ys-mk(s)\big\}=mk^*(y/m)
\end{align*}
for all $y\in (-m\delta ,0)$. Setting $x=-y$ and rearranging we obtain, for all $x\in (0,m\delta)$,
\begin{align*}
\lim_{n\to\infty}\frac{-\log \P(X_1+\cdots+X_m\le a^{-n(1-\beta)}x)}{a^{n\beta}x^{-\beta/(1-\beta)}}=
mk^*(-x/m)\,x^{\frac{\beta}{1-\beta}}=V(x/m)\,m^{\frac{1}{1-\beta}}.
\end{align*}
Denote by $H_n(x)$ the fraction on the left hand side, and by $H(x)$ the right hand
side of the display above.
Further, denote $$\hat H_n(x):=H_n(x)\,x^{-\frac{\beta}{1-\beta}} \quad \mbox{ and } \hat H(x):=H(x)\,x^{-\frac{\beta}{1-\beta}}.$$
Let $I=[a^{-2(1-\beta)}m\delta, a^{-(1-\beta)}m\delta]$.
Then $\hat H_n$ converges to $\hat H$ pointwise on $I$, $\hat H$ is continuous on $I$, and each $\hat H_n$
is decreasing. Hence $\hat H_n$ converges to $\hat H$ uniformly on $I$, see e.g.~\cite{BH}, and
therefore $H_n$ converges uniformly to $H$ on $I$.
By the periodicity of $V$ we have
\begin{align*}
\sup_{x\in a^{-n(1-\beta)}I}&\, \big|\,x^{\frac{\beta}{1-\beta}}\,\log \P\big(X_1+\cdots+X_m<x\big)
+V(x/m)\,m^{\frac{1}{1-\beta}}\big| 
=\sup_{x\in I}\big|H_n(x)-H(x)\big|,
\end{align*}
and hence
\begin{align*}
\sup_{x\le a^{-N(1-\beta)}m\delta}&\,\big|\,x^{\frac{\beta}{1-\beta}}\,\log P\big(X_1+\cdots+X_m<x\big)
+V(x/m)\,m^{\frac{1}{1-\beta}}\big| \\&
=\sup_{n> N}\sup_{x\in I}\big|H_n(x)-H(x)\big|,
\end{align*}
which converges, as $N\to \infty$, to zero as required.
\end{proof}

The next lemma states a basic property of real analytic, multiplicatively
periodic functions.

\begin{lemma}
\label{a1}
Let $V\colon(0,\infty)\to (0,\infty)$ be a real-analytic, multiplicatively
periodic function and let $\gamma>0$. Suppose that $B$ be a dense subset of
$[1,\infty)$ such that
\begin{align*}
\liminf_{\e\to 0}\big(V(\e/b)\,b^{\gamma}- V(\e)\big)\ge 0\qquad \text{ for all } b\in B.
\end{align*}
Then, for any $b_0>1$,
\begin{align*}
\inf_{\heap{\e>0}{b\ge b_0}} \big(V(\e/b)\,b^{\gamma}- V(\e)\big)>0.
\end{align*}
\end{lemma}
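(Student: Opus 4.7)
The plan is to reduce the problem to a compactness argument, using multiplicative periodicity of $V$ to remove the non-compactness in $\eps$ and the coercivity $b^\gamma\to\infty$ to remove it in $b$, and then to upgrade the pointwise non-negativity supplied by the hypothesis to strict positivity via real analyticity. Write $\lambda>1$ for a period of $V$ (replacing it by $1/\lambda$ if necessary) and set $H(\eps,b):=V(\eps/b)\,b^\gamma-V(\eps)$.

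\textbf{Step 1 (pointwise non-negativity for all $b\ge 1$).} Since $V(\lambda x)=V(x)$, the function $\eps\mapsto H(\eps,b)$ is itself multiplicatively periodic with period~$\lambda$, so continuity gives
\[
\liminf_{\eps\to 0}H(\eps,b)=\inf_{\eps>0}H(\eps,b)=\min_{\eps\in[1,\lambda]}H(\eps,b).
\]
Hence the hypothesis says $H(\eps,b)\ge 0$ for every $\eps>0$ and $b\in B$, and density of $B$ together with continuity of $H$ in $b$ extends this to all $b\ge 1$.

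\textbf{Step 2 (strict positivity for $b>1$).} Introduce $W(\eps):=V(\eps)\,\eps^{-\gamma}$; Step~1 is equivalent to $W(\eps/b)\ge W(\eps)$ for $b\ge 1$, i.e.\ $W$ is non-increasing on $(0,\infty)$. If $H(\eps_*,b_*)=0$ for some $b_*>1$, then $W(\eps_*/b_*)=W(\eps_*)$, and monotonicity forces $W$ to be constant on the non-trivial interval $[\eps_*/b_*,\eps_*]$. But $W$ is real-analytic, so by the identity theorem $W$ is constant on all of $(0,\infty)$, which means $V(\eps)=c\,\eps^\gamma$ for some $c>0$; this contradicts $V(\lambda\eps)=V(\eps)$ because $\lambda^\gamma\ne 1$. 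Hence $H(\eps,b)>0$ pointwise for every $\eps>0$ and every $b>1$.

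\textbf{Step 3 (uniformity).} Let $m:=\min_{\eps>0}V(\eps)$ and $M:=\max_{\eps>0}V(\eps)$; both are finite and positive by continuity and periodicity. Pick $B_1\ge b_0$ with $mB_1^\gamma\ge M+1$; then $H(\eps,b)\ge mb^\gamma-M\ge 1$ for every $b\ge B_1$ and every $\eps>0$. On the remaining compact box $[1,\lambda]\times[b_0,B_1]$, which covers all $\eps>0$ by periodicity, $H$ is continuous and strictly positive by Step~2, hence attains a strictly positive minimum. Combining the two ranges of $b$ yields the required uniform lower bound.

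\textbf{Main obstacle.} The crux is Step~2: non-negativity of $H$ and monotonicity of $W$ are on their own compatible with $W$ having flat segments, which would allow zeros of $H$ for $b>1$ and destroy strict positivity. Real analyticity is exactly what forbids this, via the identity theorem: any flat segment propagates and forces $V$ to be a power law, contradicting multiplicative periodicity. Once this pointwise strict positivity is secured, the uniformity in Step~3 is a standard compactness argument.
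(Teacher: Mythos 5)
Your proof is correct and follows essentially the same route as the paper's: both extend the hypothesis to all $b\ge 1$ via periodicity, density and continuity of $V$, and both use real analyticity as the crux to rule out equality, since a flat segment of $W(\eps)=V(\eps)\eps^{-\gamma}$ (the paper's linear segment of $f(y)=\log V(e^{-y})$, the same object in logarithmic coordinates) propagates by the identity theorem and forces $V$ to be a power law, contradicting multiplicative periodicity. Your only reorganisation is to establish pointwise strict positivity first and then invoke compactness (periodicity in $\eps$, coercivity in $b$), whereas the paper assumes the global infimum is zero, argues it is attained, and derives the contradiction at the minimiser --- a cosmetic difference, and arguably a cleaner order of steps.
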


\begin{proof} Define a real-analytic function $f\colon\R\to \R$ by $f(y)=\log V (e^{-y})$. Then
\begin{align*}
V(\e/b)b^{\gamma}- V(\e)=\exp\{f(-\log\e+\log b)\}b^{\gamma}-\exp\{f(-\log\e)\},
\end{align*}
and, substituting $x=-\log\e$ and $\delta=\log b$, we obtain that it suffices to show that
\begin{align}
\label{nonneg}
\liminf_{x\to\infty} & \big(e^{f(x+\delta)+\gamma\delta}-e^{f(x)}\big)\ge 0
\qquad \text{ for all }\delta\in D:=\{\log b \colon b\in B\}\\
& \Longrightarrow
\inf_{\heap{x\in \R}{\delta\ge\delta_0}}\big(e^{f(x+\delta)+\gamma\delta}-e^{f(x)}\big)>0
\quad \mbox{for any $\delta_0>0$.} \label{pos}
\end{align}
By periodicity of $f$, the statement of~\eqref{nonneg} is equivalent to
$f(x+\delta)+\gamma\delta\ge f(x)$ for all $x\in\R$ and $\delta\in D$.
As $B$ is dense in $[1,\infty)$, $D$ is dense in $[0,\infty)$ and,
using the continuity of $f$, 
\begin{align}
\label{ineq}
f(x+\delta)+\gamma\delta\ge f(x) \quad\text{ for all } x\in\R \text{ and } \delta\ge 0.
\end{align}
Suppose
that~\eqref{pos} is not true and the infimum is equal to zero. Since $f$ is periodic and
$$\lim_{\delta\uparrow\infty} \inf_{x\in\R} \big\{ e^{f(x+\delta)+\gamma\delta}-e^{f(x)}\big\} = \infty,$$
the infimum in~\eqref{pos} is attained
at some point~$(\hat x,\hat \delta)$. As this infimum is zero, we infer that
\begin{align}
\label{eq}
f(\hat x+\hat \delta)+\gamma\hat \delta= f(\hat x).
\end{align}
Let $\eta\in [0,\hat\delta]$. Using~\eqref{ineq} for the points $\hat x, \eta$ and $\hat x+\eta, \hat\delta-\eta$,
we obtain
$f(\hat x+\eta)+\gamma\eta\ge f(\hat x)$ and
$f(\hat x+ \hat\delta)+\gamma(\hat\delta-\eta)\ge f(\hat x+\eta)$.
The second inequality together with~\eqref{eq} implies $f(\hat x)\ge f(\hat x+\eta)+\gamma\eta$, which
together with the first inequality gives $f(\hat x+\eta)=f(\hat x)-\gamma\eta$ for all
$\eta\in [0,\hat\delta]$. Hence $f$ is linear and non-zero on $[\hat x,\hat x+\hat\delta]$. As it is real-analytic
it must be linear on $\R$, contradicting the periodicity of $f$.
\end{proof}

We now return to the study of the martingale limit~$W$. A result from~\cite{biggins1} states that
\begin{align*}
\lim_{n\to\infty}\frac{\log \E\exp\{-s a^n W\}}{a^{n\beta}}=k(s)
\end{align*}
for some real-analytic function $k$ on $(0,\infty)$. Using Lemma~\ref{l_tailm} we infer from this that, for some
real-analytic and multiplicatively periodic $M\colon(0,\infty)\to(0,\infty)$ with period $a^{1-\beta}$ we have,
for any $m\in\N$,
\begin{equation}\label{la2}
-\log \P\big(W_1+\cdots+W_m<\eps\big)=m\,M(\eps/m)(\eps/m)^{-\frac{\beta}{1-\beta}}+o\big(\eps^{-\frac{\beta}{1-\beta}}\big)
\qquad \text{as }\eps\downarrow 0.
\end{equation}
Using first \eqref{SSP}, then \eqref{la2}, and finally $a^{\beta}=\mu$ and periodicity of~$M$, we get
\begin{align*}
\log\P(W<\e\,|\,Z_n=m)
& 
=\log\P\Big(\sum_{i=1}^mW_i<\e a^n\Big)\\
&=-M\big(\e a^n/m\big)\,m^{\frac{1}{1-\beta}}\,a^{-\frac{\beta n}{1-\beta}}
\e^{-\frac{\beta}{1-\beta}}+o\big(\e^{-\frac{\beta}{1-\beta}}\big)\\
&=-M\big(\e \mu^n/m\big)\,(m/\mu^n)^{\frac{1}{1-\beta}} \,
\e^{-\frac{\beta}{1-\beta}}+o\big(\e^{-\frac{\beta}{1-\beta}}\big).
\end{align*}
Combining with~\eqref{la2} again we obtain
\begin{equation}\label{gqrks}
\begin{aligned}
\log \P(W<\e)& -\log \P(W<\e\,|\,Z_n=m) \\
&=\left(M(\e \mu^n/m)(m/\mu^n)^{\frac{1}{1-\beta}}-M(\e)\right)\e^{-\frac{\beta}{1-\beta}}
+o(\e^{-\frac{\beta}{1-\beta}}).
\end{aligned}\end{equation}
Lemma~\ref{a1} enables us to analyse the bracketed term.

\begin{lemma}
\label{mmm}
For any $b_0>1$, we have
\begin{align*}
\inf_{\heap{\e>0}{b\ge b_0}}\big(M(\e/b)b^{\frac{1}{1-\beta}}- M(\e)\big)>0.
\end{align*}
\end{lemma}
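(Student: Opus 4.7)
The plan is to invoke Lemma~\ref{a1} with $\gamma := 1/(1-\beta)$ applied to the real-analytic, multiplicatively periodic function $M$, taking as dense subset
$$B := \bigl\{ m/\mu^n \colon n \in \N,\ m \in \N,\ \P(Z_n = m) > 0 \bigr\} \subseteq [1, \infty),$$
where the inclusion uses that $Z_n \geq \mu^n$ almost surely.

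To verify the liminf hypothesis at a given $b = m/\mu^n \in B$, I start from the trivial bound
$$\P(W < \varepsilon) \;\geq\; \P(Z_n = m)\,\P\bigl(W < \varepsilon \,\big|\, Z_n = m\bigr).$$
Taking logarithms, subtracting, and then rewriting the left-hand side using \eqref{gqrks} yields
$$\bigl(M(\varepsilon/b)\,b^{1/(1-\beta)} - M(\varepsilon)\bigr)\,\varepsilon^{-\beta/(1-\beta)} + o\bigl(\varepsilon^{-\beta/(1-\beta)}\bigr) \;\geq\; \log \P(Z_n = m).$$
The right-hand side is a finite constant while the prefactor $\varepsilon^{-\beta/(1-\beta)}$ blows up, so dividing and letting $\varepsilon \downarrow 0$ produces
$$\liminf_{\varepsilon \downarrow 0}\bigl(M(\varepsilon/b)\,b^{1/(1-\beta)} - M(\varepsilon)\bigr) \;\geq\; 0,$$
which is exactly the hypothesis required by Lemma~\ref{a1}.

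It remains to show that $B$ is dense in $[1, \infty)$. Fix $k > \mu$ with $p_k > 0$, which exists since $p_\mu \neq 1$. Given $r > 1$ and $\delta > 0$, build a Galton--Watson tree recursively by letting each vertex have either $\mu$ or $k$ offspring according to a greedy rule: at generation $\ell$, take $j_{\ell+1} \in \{0, 1, \ldots, Z_\ell\}$ to be the largest value for which $R_{\ell+1} := Z_{\ell+1}/\mu^{\ell+1} \leq r$. When the constraint $j_{\ell+1} = Z_\ell$ is binding, $R$ grows geometrically as $R_{\ell+1} = R_\ell\,(k/\mu)$, so after $O(\log r)$ steps the greedy exits the saturated regime; thereafter the non-saturated choice places $R_{\ell}$ within $(k-\mu)/\mu^{\ell}$ of $r$. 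Choosing $n$ large enough that $(k-\mu)/\mu^n < \delta$ then gives $R_n \in B$ with $|R_n - r| < \delta$.

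The main obstacle is the density verification: establishing a clean greedy argument that produces arbitrarily accurate approximations is where the support structure of $N$ enters. Once density is in hand, Lemma~\ref{a1} applies directly and yields the uniform positivity of $M(\varepsilon/b)\,b^{1/(1-\beta)} - M(\varepsilon)$ over $\varepsilon > 0$ and $b \geq b_0$. The probabilistic core of the proof is a short rearrangement of \eqref{gqrks} combined with a trivial conditioning bound.
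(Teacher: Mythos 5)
Your proof is correct and follows the same overall strategy as the paper: reduce to Lemma~\ref{a1} with $\gamma=1/(1-\beta)$ and $B=\{m/\mu^n: \P(Z_n=m)>0\}$, verify the liminf hypothesis at each $b=m/\mu^n$ by combining \eqref{gqrks} with the elementary bound $\P(W<\e)\ge \P(Z_n=m)\,\P(W<\e\,|\,Z_n=m)$ (the paper phrases this same bound through the Bayes identity $\P(W<\e)/\P(W<\e\,|\,Z_n=m)=\P(Z_n=m)/\P(Z_n=m\,|\,W<\e)\ge\P(Z_n=m)$, but the content is identical), and then check that $B$ is dense in $[1,\infty)$. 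The one place where you genuinely diverge is the density step. The paper fixes $\nu>\mu$ with $p_\nu>0$, sets $d=\nu-\mu$, and shows by induction that the entire arithmetic progression $\{m/\mu^n:\mu^n\le m\le\nu^n,\ m\equiv\mu^n\ (\mathrm{mod}\ d)\}$ lies in $B_n$; density then follows since the mesh $d/\mu^n$ of these progressions shrinks while their range $[1,(\nu/\mu)^n]$ expands. Your greedy construction instead targets a single prescribed $r>1$ and tracks one admissible sequence of generation sizes whose normalised value $R_n=Z_n/\mu^n$ approaches $r$ to within $(k-\mu)/\mu^n$. This works: $R_\ell$ is nondecreasing and bounded by $r$, each saturated step multiplies it by $k/\mu>1$, so only finitely many steps are saturated, and every subsequent non-saturated step leaves a gap below $r$ of at most $(k-\mu)/\mu^{\ell}$, which tends to zero because $\mu\ge 2$ in the case where Lemma~\ref{mmm} is invoked. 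Your version is slightly more hands-on but proves only what is needed (density), whereas the paper's induction identifies a clean explicit subset of $B_n$; either is acceptable, though you should state explicitly that the finitely many saturated steps eventually cease so that the error bound applies for all large $n$.
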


\begin{proof} Since $M$ is real-analytic and multiplicatively
periodic, it suffices to check that it satisfies the assumptions of Lemma~\ref{a1} with
$\gamma=1/(1-\beta)$. For fixed $n\in\N$ we define
\begin{align*}
B_n:=\big\{m/\mu^n: \P(Z_n=m)\neq 0\big\}
\end{align*}
and $B=\cup_{n\in\N}B_n$. We now show that $B$ is dense in $[1,\infty)$. Indeed, as $p_\mu\not=1$
there exists $\nu>\mu$ with $p_{\nu}\neq 0$. Denote $d=\nu-\mu$. We can prove by induction that
$$A_n:=\big\{m/\mu^n \colon \mu^n\le m\le \nu^n, m\equiv \mu^n(\text{mod } d)\big \}\subset B_n.$$
This is obvious  for $n=1$. Assuming that $\P(Z_{n-1}=r)\neq 0$ for all $r$ such that $\mu^{n-1}\le r\le \nu^{n-1}$,
$r\equiv \mu^{n-1}(\text{mod } d)$ we obtain that $\P(Z_n=m)\neq 0$ for all $m$ such that
there is $r$ satisfying the conditions above and such that $\mu r\le m\le \nu r$, $m\equiv \mu^n(\text{mod } d)$.
It is easy to see that this is equivalent to the condition $\mu^n\le m\le \nu^n, m\equiv \mu^n(\text{mod } d)$.
Hence $A_n\subset B_n$ and since $\cup_{n\in\N}A_n$ is dense in $[1,\infty)$ we obtain that
$\cup_{n\in\N}B_n$ is dense in $[1,\infty)$.
\smallskip

Let $b\in B$, that is, $b=m/\mu^n$
for some $m$ and $n$. We have
\begin{align*}
\P(Z_n=m\,|\,W<\e)\,\P(W<\e)=\P(W<\e, Z_n=m)=\P(W<\e\,|\,Z_n=m)\,\P(Z_n=m)
\end{align*}
and so
\begin{align*}
\liminf_{\e\downarrow 0}\frac{\P(W<\e)}{\P(W<\e\,|\,Z_n=m)}
=\liminf_{\e\downarrow 0}\frac{\P(Z_n=m)}{\P(Z_n=m\,|\,W<\e)}\ge \P(Z_n=m)>0.
\end{align*}
Hence
\begin{align}
\label{linf}
\liminf_{\e\downarrow 0} \big\{\log\P(W<\e)-\log\P(W<\e\,|\,Z_n=m)\big\}>-\infty.
\end{align}
Combining~\eqref{gqrks} with~\eqref{linf} we obtain
\begin{align*}
\liminf_{\e\downarrow 0}\left(M(\e/b)b^{\frac{1}{1-\beta}}-M(\e)\right)
=\liminf_{\e\downarrow 0}\left(M(\e \mu^n/m)(m/\mu^n)^{\frac{1}{1-\beta}}-M(\e)\right)
\ge 0,
\end{align*}
as required.
\end{proof}
\medskip

We now complete the proof of Theorem~1. Fix $n\in \N$ and use
Lemma~\ref{mmm} to find $c>0$\vspace{-1mm} (depending on $n$) such that
$M(\e/b)b^{\frac{1}{1-\beta}}- M(\e)\ge 2c$ for all $\e>0$ and $b\ge 1+\mu^{-n}$.
For all $m\ge \mu^n+1$ we have $m/\mu^n\ge 1+\mu^{-n}$ and hence \eqref{gqrks} implies
\begin{align*}
\log \P(W<\e)-\log \P(W<\e\,|\,Z_n=m)\ge c\e^{-\frac{\beta}{1-\beta}}.
\end{align*}
Therefore
\begin{align*}
\P\left(Z_n>\mu^n\,|\,W<\e\right) & 
 =\sum_{m=\mu^n+1}^{\infty}\frac{\P(W<\e\,|\,Z_n=m)}{\P(W<\e)}\,\P(Z_n=m)\\
& \le\sum_{m=\mu^n+1}^{\infty}\exp\left\{-c\e^{-\frac{\beta}{1-\beta}}\right\}\P(Z_n=m)
 \le \exp\left\{-c\e^{-\frac{\beta}{1-\beta}}\right\}\to 0
\end{align*}
as $\e\downarrow 0$, completing the proof of Theorem~1 in the case~$\mu>1$.

\begin{rem}\rm Lemma~\ref{mmm} can be seen as an illustration of the near-constancy phenomenon 
(see~\cite{biggins} and references therein), which consists in the fact that the function $M$
does not vary too much. Some numerical studies show that the variation of $M$ can be very small, and 
close theoretical bounds for $M$ are obtained for the case of an infinitely divisible distribution. 
No theoretical framework yet exists to describe the near-constancy of $M$ in the general case.
Lemma~\ref{mmm} implies that $$M(x)(y/x)^{\frac{1}{1-\beta}}-M(y)\ge 0 \quad
\mbox{for all $0<x\le y$,}$$ and so, with  $g(x)=x^{-\frac{1}{1-\beta}}$, the function $x\mapsto M(x)g(x)$ is decreasing. 
As the fluctuations of $M$ do not destroy the monotonicity of the decreasing function $g$ they cannot be too large. 
\end{rem}
\bigskip

{{\bf Acknowledgement:} The first author is supported by EPSRC grant EP/GO55068/1, and the second author is supported by an \emph{Advanced Research Fellowship} from EPSRC.}
\bigskip

\vspace{0.5cm}

\small 
Nathana\"el Berestycki: Statistical Laboratory, DPMMS, University of Cambridge. Wilberforce Rd., Cambridge CB3 0WB. United Kingdom.\\

Peter M\"orters: Department of Mathematical Sciences,
University of Bath.
Claverton Down,
Bath BA2 7AY.
United Kingdom.\\

Nadia Sidorova: Department of Mathematics, University College London.
Gower Street, London WC1E 6BT. United Kingdom.

\end{document}